\theoremstyle{plain}
\newtheorem{definition}{Definition}[section]
\newtheorem{theorem}{Theorem}[section]
\newtheorem{lemma}[theorem]{Lemma}
\newtheorem{corollary}[theorem]{Corollary}
\title{Well-quasi-ordering in lattice path matroids}
\author[Jose]{Meenu Mariya Jose}
\address{School of Mathematics and Statistics,
	Victoria University of Wellington,
	New Zealand}
\email{josemeen@myvuw.ac.nz}
\author[Mayhew]{Dillon Mayhew}
\address{School of Mathematics and Statistics,
	Victoria University of Wellington,
	New Zealand}
\email{dillon.mayhew@vuw.ac.nz}
\date{\today}
\keywords{Lattice path matroids; well-quasi-ordering; branch-width}
\begin{document}
	
	\begin{abstract}
		Lattice path matroids form a subclass of transversal matroids and were introduced by Bonin, de Mier and Noy \cite{Latt1}. Transversal matroids are not well-quasi-ordered, even when the branch-width is restricted. Though lattice path matroids are not well-quasi-ordered, we prove that lattice path matroids of bounded branch-width are well-quasi-ordered.
	\end{abstract}
	
	\maketitle
	
	\section{Introduction}
	
	Well-quasi-ordering is at the heart of major projects undertaken in discrete mathematics in the recent years. A \emph{quasi-ordering} is a relation that is reflexive and transitive. A \emph{well-quasi-ordering} is a quasi-ordering with the property that if $ a_{0}, a_{1},\ldots $ is an infinite sequence in the set $ A $ that we consider, then there exists $ i $ and $ j $ such that $ a_{i} \leq a_{j} $. 
	
	The expansive Robertson-Seymour graph-minors project was a major accomplishment in discrete mathematics. It was a gigantic feat spanning over 500 pages that proved that any infinite collection of graphs is well-quasi-ordered under the minor relation.
	
	Recently, Geelen, Gerards and Whittle announced a proof (Theorem 6, \cite{article}) that the class of $ F $-representable matroids is well-quasi-ordered under the minor relation, where $ F $ is any finite field. This is connected to their proof of Rota's conjecture. One of the crucial steps that edged them closer to the proof of the former conjecture was their 2002 proof \cite{GEELEN} that a class of $ F $-representable matroids with bounded branch-width is well-quasi-ordered. We will accomplish the same goal for the class of lattice path matroids.

	The class of lattice path matroids is an elegant class discovered by Bonin, de Mier and Noy \cite{Latt1} with nice structural properties. It is a subclass of transversal matroids, but surprisingly is closed under duality. Interestingly enough, lattice path matroids are not well-quasi-ordered (for examples, refer to Section \ref{wqo}). However, we prove that they are well-quasi-ordered when certain restrictions are placed on the class. 
	
	\begin{theorem}
		Lattice path matroids of bounded branch-width are well-quasi-ordered.
	\end{theorem}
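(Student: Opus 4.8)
The plan is to reduce the statement to a classical well-quasi-ordering theorem for sequences (Higman's Lemma) by exploiting the essentially one-dimensional structure of a lattice path matroid. Recall that a lattice path matroid $M[P,Q]$ is determined by its two bounding lattice paths $P$ and $Q$ from $(0,0)$ to $(m,r)$, and that its ground set $\{1,\dots,m+r\}$ carries a natural linear order given by the step indices. My first move is to record, for the consecutive separation $(\{1,\dots,k\},\{k+1,\dots,m+r\})$, the value of the matroid connectivity; a short computation with the rank function of $M[P,Q]$ gives $\lambda(k)=h_P(k)-h_Q(k)$, the gap between the two paths measured after $k$ steps. Thus the connectivity along the natural order is exactly the width of the region between $P$ and $Q$.

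The second step is the structural heart of the argument: I would show that bounding the branch-width of $M[P,Q]$ forces this gap to be bounded, say $\lambda(k)\le d$ for all $k$, where $d$ depends only on the branch-width bound. Because a lattice path matroid is ``interval-like'', its branch-decompositions can be straightened into nearly linear ones, so that branch-width and the linear width $\max_k\lambda(k)$ agree up to a constant factor; this is the place where the special geometry of $M[P,Q]$ is essential, and it is precisely what fails for arbitrary transversal matroids. Once the region has width at most $d$, the local behaviour of the pair $(P,Q)$ inside each successive window of bounded height lies in a finite set, so I can encode $M[P,Q]$ as a word over a finite alphabet $\Sigma_d$, each letter describing one step together with the bounded boundary data needed to glue consecutive windows.

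With the finite encoding in hand, Higman's Lemma immediately yields that the set $\Sigma_d^{*}$ of such words is well-quasi-ordered under the subsequence-embedding order. It then remains to verify the compatibility statement: whenever a word $w'$ embeds into a word $w$, the matroid $M[P',Q']$ encoded by $w'$ is a minor of $M[P,Q]$. I would prove this by translating an embedding into a sequence of single-step deletions and contractions, using the description of lattice path matroid minors as path surgeries, and checking that the bounded boundary data carried by each letter keeps the surgeries consistent across the skipped steps. Combining this with the well-quasi-ordering of $\Sigma_d^{*}$ gives the theorem.

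The main obstacle I anticipate is the structural step: controlling branch-width by the linear width of the region, that is, showing that bounded branch-width implies a bounded gap. The finiteness of the alphabet $\Sigma_d$, and hence the entire reduction to Higman's Lemma, rests on it, and it is exactly the ingredient that distinguishes lattice path matroids from the wider class of transversal matroids, which are not well-quasi-ordered even under a branch-width bound. A secondary difficulty is the minor-compatibility of embeddings; if the linear straightening proves too lossy, I would instead retain the full branch-decomposition tree, label its nodes by the bounded boundary data, and appeal to Kruskal's Tree Theorem in place of Higman's Lemma, at the cost of a more involved compatibility argument.
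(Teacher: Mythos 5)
Your steps 1, 2 and 4 are essentially sound: the gap $h_Q(i)-h_P(i)$ (note the sign) is indeed the connectivity of the separation $(\{1,\dots,i\},\{i+1,\dots,m+r\})$, and bounded branch-width does bound this gap --- the paper gets this from Lemma~\ref{sqwidth}, since a gap of $k$ yields a $U_{k,2k}$-minor. The fatal problem is step 5, the compatibility claim that a subsequence embedding of words implies a minor relation between the encoded matroids; this is false, and it is the heart of the matter, not a verification. The reason is that minor surgeries on a path presentation are ``diagonal'': deleting $x$ removes the last East step of $P$ at or before $x$ but the first East step of $Q$ at or after $x$, so a single deletion or contraction does not remove one letter of your word, and conversely a word embedding can skip a balanced block of letters whose removal cannot be imitated by any sequence of minor operations. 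For the unannotated encoding this fails immediately: $w=(E,N)(N,E)(E,E)(E,N)(N,E)$ encodes $U_{1,2}\oplus U_{0,1}\oplus U_{1,2}$, while its subsequence on letters $1,3,5$, namely $(E,N)(E,E)(N,E)$, encodes $U_{1,3}$, which is connected on three elements and so cannot be a minor of a direct sum whose components have at most two elements.

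Annotating each letter with the natural bounded boundary data, the gap itself, does not repair this. Take $w=(E,N)(E,N)(N,N)(N,E)(E,N)(N,N)(N,E)(N,E)$, i.e.\ $P=EENNENNN$, $Q=NNNENNEE$, with gap sequence $1,2,2,1,2,2,1,0$. Skipping the balanced block $w_4w_5=(N,E)(E,N)$ gives $w'=(E,N)(E,N)(N,N)(N,N)(N,E)(N,E)$, and the selected letters carry gaps $1,2,2,2,1,0$ in $w$, exactly their gaps in $w'$, so this is an annotation-preserving embedding; but $w'$ is the full $2\times 4$ rectangle, i.e.\ $U_{4,6}$, and $U_{4,6}$ is not a minor of $M[w]$. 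Indeed $M[w]$ has interval presentation $[1,3],[2,4],[3,6],[5,7],[6,8]$, hence two disjoint $4$-element circuits $\{1,2,3,4\}$ and $\{5,6,7,8\}$ covering the ground set; since $M[w]$ has no loops and no parallel pairs, a $U_{4,6}$-minor would have to be $M[w]/x\backslash y$ for single elements $x,y$, and then either the circuit avoiding $x$ survives intact as a dependent $4$-set, or (if $y$ lies in it) the circuit containing $x$ loses $x$ and becomes a dependent $3$-set after contraction; either way the minor has a circuit of size at most $4$, whereas every circuit of $U_{4,6}$ has five elements. Conceptually, a gap-preserving embedding can excise a ``valley'' where the gap dips and recovers, flattening the profile of the region, while the minor order respects the nested structure of such valleys; this nesting is tree-like, which is exactly why the paper never splits a presentation at an arbitrary position, but only pulls it apart at a proper full-width $k\times k$ square (Definition~\ref{pull}), proves the gluing statement Lemma~\ref{imp} for those splits alone, and then runs Nash-Williams' minimal bad sequence argument with induction on square-width rather than a flat Higman encoding. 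Your own fallback --- Kruskal-type reasoning on a labelled decomposition --- is the right instinct, but it is the main argument, not a contingency, and your primary route cannot be patched by enriching the alphabet with the gap data.
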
 
	
	The proof of the theorem uses the elegant minimal bad sequence argument that Nash-Williams employs to prove that finite trees are well-quasi-ordered \cite{MR0153601}.
	
	On the other hand, transversal matroids do not behave so well under those same restrictions. To observe this, consider the well-known polygon matroid anti-chain (Example 14.1.2, \cite{book}) that starts with the matroids represented in Fig \ref{poly} .
	
	\begin{figure}[h] \label{poly}
		\centering
		\includegraphics[scale=0.4]{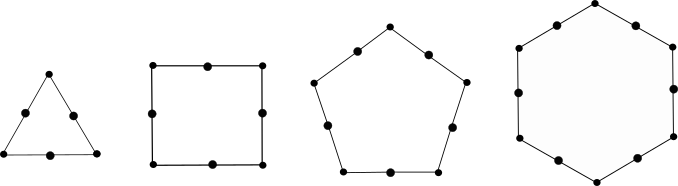}
		\caption{The first four matroids in the polygon matroid anti-chain}
		
	\end{figure}
	
	This is a class of rank-3 matroids, and hence bounded branch-width (at most 3). Since branch-width of the dual of a matroid is same as that of the original matroid, the dual class of this matroid class also has bounded branch-width. Since strict gammoids are exactly the duals of transversal matroids (Corollary 2.4.5, \cite{book}), it is enough to show that this infinite class indeed consists of strict gammoids, and this is an easy exercise. Thus we have found an infinite class of transversal matroids of bounded branch-width that form an anti-chain.\\

	\section{Lattice Path Matroids}
	
	The class of lattice path matroids can be better understood in terms of lattice paths.
	
	A lattice path in $ \mathbb{Z}^d $ of length $ n $ with steps in $ S $ is a sequence of points $ s_{0}s_{1}\ldots s_{n} $ such that each $ s_{i}-s_{i-1} $ is in $ S $. For the paths considered here, $ S = \{E, N\} $, which are called $ East $ and $ North $ respectively, where $ E = (1,0) $ (or moving right) and $ N = (0,1) $ (or moving up). The paths are written as words or strings in the alphabet $ \{E,N\} $.
	
	Let $ P $ and $ Q $ be two lattice paths. Then, $ P_{i} $ and $ Q_{i} $ are the sub-strings that represent the first $ i $ steps of the respective paths.
	Let $ P = p_{1}p_{2}\ldots p_{m+r} $ and $ Q = q_{1}q_{2}\ldots q_{m+r} $ be two lattice paths from $ (0,0) $ to $ (m,r) $ such that the paths either go East or North, where $ P $ never goes above $ Q $, or in other words, for every $ i $, the number of North steps in $ P_{i} $ is never more than that in $ Q_{i} $ (and the number of East steps in $ Q_{i} $ is never more than that in $ P_{i} $ ). Let $ {p_{u_{1}},p_{u_{2}},\ldots, p_{u_{r}}} $ be the set of North steps of $ P $  with $ u_{1}<u_{2}<\ldots<u_{r} $. Let $ {q_{l_{1}},q_{l_{2}},\ldots, q_{l_{r}}} $ be the set of North steps of $ Q $  with $ l_{1}<l_{2}<\ldots<l_{r} $. Let $ N_{i} $ be the interval $ [l_{i},u_{i}] $ of integers. Let $ M[P,Q] $ be the transversal matroid that has ground set $ [m+r] $ and presentation $ (N_{i}:i \in [r]) $. A $ lattice $ $path $ $ matroid $ is a transversal matroid that is isomorphic to $ M[P,Q] $ for some such pair of lattice path $ P $ and $ Q $.\\
	
	\begin{figure}[h] 
		\centering
		\includegraphics{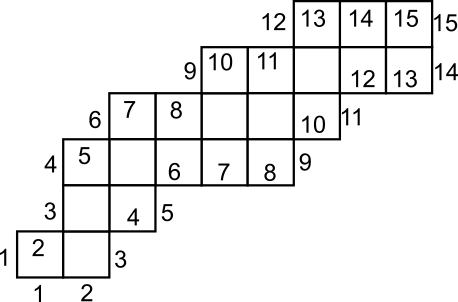} 
		\caption{A lattice path matroid that starts at $ (0,0) $ and ends at $ (9,6) $ }
	\end{figure}

	When thought of as arising from the particular presentation of bounding paths $ P $ and $ Q $, the elements are in their natural order. However this order is not evident in the matroid structure. \\

	Let $ X $ be a subset of the ground set $ [m+r] $ of the lattice path matroid $ M[P,Q] $. The lattice path $ P(X) $ is the word $ s_{1}s_{2}\ldots s_{m+r} $ in the alphabet $ \{E,N\} $, where  
	\begin{align*}
	s_{i}  &= \begin{cases} N, & \text{if} \quad i \in X \\ E, & \text{otherwise.} \end{cases}
	\end{align*}
	
	Hence independent sets are subsets of $ [m+r] $ such that $|I| \leq r $ and the associated path $ P(I) $ is a path or part of a path that lies between $ P $ and $ Q $. Alternatively, independent sets are partial transversals of the corresponding transversal matroid. This leads to the following characterisation of bases of lattice path matroids:\\
	
	A subset $ B $ of $ [m+r] $ with $ |B|=r $ is a basis of $ M[P,Q] $ if and only if the associated lattice path stays in the region bounded by $ P $ and $ Q $ (Theorem 3.3, \cite{Latt1}).\\

	\textbf{Minors:} Single element deletions and contractions can be described in terms of bounding paths of $ M = M[P,Q] $  as follows: An isthmus is an element $ x $ for which some $ N_{i} $ is $ \{x\} $. So, to delete or contract $ x $, delete the corresponding North step from both the bounding paths. Correspondingly, to delete or contract a loop, which is an element that is in no set $ N_{i} $, delete that East step from both the bounding paths.
	
	If $ x $ is neither a loop nor an isthmus, the upper bounding path of $ M\backslash x $ is formed by deleting from $ Q $ the first East step that is at or after $ x $ and the lower bounding path is obtained by deleting from $ P $  the last East step that is at or before $ x $. Dually, the upper bounding path of $ M/x $ is formed by deleting from $ Q $ the last North step that is at or before $ x $ and the lower bounding path is obtained by deleting from $ P $  the first North step that is at or after $ x $.

	Lattice path matroids are closed under minors, duals and direct sums (Theorem 3.1, \cite{BONIN}) but is not closed under the operations of truncation, elongation and free extension.
	
	Nested matroids form a subclass of lattice path matroids that has appeared under different names in varying contexts. A nested matroid is obtained from the empty matroid by iterating the operations of adding co-loops and taking free extensions to the empty matroid. Bonin and de Mier \cite{BONIN} defined them in terms of lattice path matroids as a matroid of the form $ M[P,Q] $, where $ P = E^{m}N^{r} $ and named them generalised Catalan matroids. They later proved that nested matroids are well-quasi-ordered (Theorem 5.4, \cite{Latt2}).
	
	\section{Well-Quasi-Ordering} \label{wqo}
	
	An infinite sequence $ a_{1},a_{2},\ldots $ is called an anti-chain when there does not exist $ i < j $ such that $ a_{i} \leq a_{j} $. Also, a sequence $ a_{1},a_{2},\ldots $ is infinitely strictly decreasing if $ a_{i} > a_{i+1} $ for $ i \geq 1 $. Equivalently, a quasi-order is a well-quasi-order is and only if there exists no infinite anti-chain or infinite decreasing sequence (See, for example, \cite{citeulike:395714}, Prop 12.1.1). 
	
	As we mentioned in the introduction, lattice path matroids are not well-quasi-ordered. There is a subclass of lattice path matroids named notch matroids by Bonin and de Mier. Their paper includes an excluded minor characterisation for notch matroids (Theorem 8.8, \cite{BONIN}). Among the excluded minors are three infinite families of lattice path matroids, which are listed below:

	\begin{itemize}
		
		\item for $ n \geq 4 $, the rank-$n$ matroid $ F_{n} := T_{n}(U_{n-2,n-1} \oplus U_{n-2,n-1}) $,
		\item for $ n \geq 2 $, the rank-$ n $ matroid $ G_{n} := T_{n}(U_{n-1,n+1} \oplus U_{n-1,n+1}) $, and
		\item  for $ n \geq 3 $, the rank-$ n $ matroid $ H_{n} := T_{n}(U_{n-2,n-1} \oplus U_{n-1,n+1}) $, 
	\end{itemize}
	
	where $ T_{n} $ denotes the truncation to rank $ n $. Thus we conclude that these infinite families each form an anti-chain in the class of lattice path matroids and hence the class is not well-quasi-ordered.

	\section{Square-width}

	Let $ [P,Q] $ be a pair of lattice paths that correspond to the lattice path matroid $ M[P,Q] $. We say that $ [P,Q]  $ is a $ path $  $ presentation $ that corresponds to the matroid $ M[P,Q] $. The size of a presentation is nothing but the size of the ground set of the corresponding matroid. We use $ r $ and $ m $ to denote the rank and co-rank of $ M[P,Q] $ respectively.

	We say that $ [P,Q] $ has a $ k \times k $ \emph{square  at  i}, if there exists an $ i \in [m+r] $ such that $ P_{i} $ has exactly $ k $ more copies of East steps than $ Q_{i} $, and $ Q_{i} $ has exactly $ k $ more copies of North steps than $ P_{i} $. A $ k \times k $ square is \emph{proper} if $ i \in [k+1,m+r-k-1] $.\\

	In the definitions following, $ + $ denotes the concatenation of steps described. Thus  $ E+N = EN $. Hence, we can consider the paths $ P $ and $ Q $ as $ P = P_{i} + P_{i}^{'} $ and $ Q = Q_{i} +Q_{i}^{'} $, where $ P_{i}^{'} $ and $ Q_{i}^{'} $ are the last $ (m+r-i) $ steps of $ P $ and $ Q $ respectively. If $ X $ is any string, then $ r(X) = $ number of North steps in $ X $, and $ m(X) = $ number of East steps in $ X $.

	A lattice path presentation is said to have \emph{square-width}  $ k $ when the largest square it contains is a $ k \times k $ square. Square-width is closely associated with branch-width of a matroid, as we see below.
	
	\begin{lemma} \label{sqwidth}
		
		Let $ [P,Q] $ be a path presentation with a $ k \times k $ square. Then $ M[P,Q] $ has a $ U_{k,2k}$-minor.
	\end{lemma}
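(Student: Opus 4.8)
The plan is to reduce $M[P,Q]$ by a sequence of single-element minor operations to the presentation $[E^kN^k,\,N^kE^k]$, and then to note that the latter presents $U_{k,2k}$. First I would record the geometry of the square. Writing $a=m(P_i)$ and $b=r(P_i)$, the defining condition of a $k\times k$ square at $i$ says $m(Q_i)=a-k$ and $r(Q_i)=b+k$, so after its first $i$ steps the lower path $P$ sits at the point $(a,b)$ while the upper path $Q$ sits at $(a-k,b+k)$. These are the lower-right and upper-left corners of the $k\times k$ box $B$ with corners $(a-k,b)$, $(a,b)$, $(a-k,b+k)$, $(a,b+k)$. The target presentation lives on $B$: its lower path goes East from $(a-k,b)$ to $(a,b)$ and then North to $(a,b+k)$, and its upper path goes North from $(a-k,b)$ to $(a-k,b+k)$ and then East to $(a,b+k)$, i.e.\ it is $M[E^kN^k,N^kE^k]$. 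Because the region this pair bounds is the whole box, every monotone lattice path across $B$ is admissible, so every $k$-subset of the $2k$ steps is a basis and the matroid is $U_{k,2k}$.

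Next I would set up the trimming via the minor rules recalled above. Every single-element deletion removes exactly one East step from each of $P$ and $Q$ (keeping the rank), and every contraction removes exactly one North step from each (dropping the rank by one); only the \emph{position} of the removed step depends on the case. The goal is to retain exactly the last $k$ East steps of $P_i$ followed by the first $k$ North steps of $P_i'$ (giving $E^kN^k$), and the last $k$ North steps of $Q_i$ followed by the first $k$ East steps of $Q_i'$ (giving $N^kE^k$). A short count confirms that this is consistent across the two paths: in the head one must perform $a-k$ deletions and $b$ contractions, and in the tail $m-a$ deletions and $r-b-k$ contractions, for a grand total of $m-k$ deletions and $r-k$ contractions; this leaves $2k$ elements and rank $k$, as required.

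Finally I would realise this trimming concretely, inducting on the number of steps outside $B$ and always operating on the current first or last element, maintaining the invariant that $P$ and $Q$ still pass through the corners $(a,b)$ and $(a-k,b+k)$. The hard part will be the coupling in the deletion and contraction rules: for an element that is neither a loop nor an isthmus, deletion strips the first East step of $Q$ at or after it together with the last East step of $P$ at or before it, and these lie at different positions---so a carelessly chosen operation could delete a step we meant to keep inside $B$. To control this I would, at each stage, choose between deletion and contraction according to whether the extreme step of the current path is East or North, and check in each case (loop, isthmus, or neither) that the operation removes precisely the intended step from each path and never reaches into $B$. Once this is verified, after $m+r-2k$ operations the presentation is exactly the box $B$, whence $M[P,Q]$ has the claimed $U_{k,2k}$-minor.
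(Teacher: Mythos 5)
Your proposal is correct and takes essentially the same route as the paper: the paper also reduces the presentation to the $k\times k$ box (which it then identifies with $U_{k,2k}$) by single-element deletions and contractions at the two ends, merely packaging this as a minimal-counterexample argument where you run it as a forward induction on the number of steps outside the box. The coupling verification you defer --- that an end operation never removes a step intended for the box, which forces the choice between deletion and contraction to depend on which kinds of steps remain outside the box in \emph{both} paths, not just on the extreme step of one path --- is precisely the assertion the paper itself makes without detailed justification (``$M[P,Q]\backslash 1$ contains a $k \times k$ square''), so your outline matches the published proof in both substance and level of detail.
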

	
	\begin{proof}
		Let $ M[P,Q] $ be a minimal counter-example to our hypothesis, with $ P $ and $ Q $ being lattice paths from $ (0,0) $ to $ (m,r) $. Let the corners of the $ k \times k $ square be at $(i,j) $, $ (i+k,j) $, $ (i,j+k) $ and $ (i+k,j+k) $. If $ i>0 $, then the first element is not part of the $ k \times k $ square. This implies that $ M[P,Q]\backslash1 $ contains a  $ k \times k $ square, which in turn implies that $ M[P,Q]\backslash1 $ contains a $ U_{k,2k} $-minor. But then so would $ M[P,Q] $, which contradicts our assumption.  Thus $ i= 0 $. Similarly, $ j=0 $ as otherwise, $ M[P,Q]/1 $ would contain a $ k \times k $ square. 
		
		Now, if $ 2k < m+r $, the path presentation of either $ M[P,Q]\backslash m+r $ or $ M[P,Q]/m+r $ contains a $k \times k $ square , which again leads to a contradiction. Hence, $ 2k = m+r $. Also, since $ k \leq $ min$ \{m,r\} $, it follows that $ m = r = k $. Thus $ P $ and $ Q $ bound a $ k \times k $ square, and so $M[P,Q] $ is in fact isomorphic to $ U_{k,2k} $ and the proof is complete.		
	\end{proof}
	
	\begin{corollary}
		
		Let $ M = M[P,Q] $ be a lattice path matroid and assume that $bw(M) \leq k $. Then the square-width is less than $ \lceil 3k/2 \rceil$. 
		
	\end{corollary}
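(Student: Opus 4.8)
The plan is to argue by contradiction. Suppose that $bw(M) \le k$ but that the square-width $w$ of $M = M[P,Q]$ satisfies $w \ge \lceil 3k/2 \rceil$. By the definition of square-width, $[P,Q]$ contains a $w \times w$ square, so Lemma \ref{sqwidth} gives $M$ a $U_{w,2w}$-minor. Since branch-width never increases under the taking of minors, we have $bw(M) \ge bw(U_{w,2w})$. The whole argument therefore reduces to establishing the lower bound $bw(U_{w,2w}) \ge \lceil 2w/3 \rceil + 1$: once this is in hand, the inequality $w \ge \lceil 3k/2 \rceil \ge 3k/2$ gives $2w/3 \ge k$ and hence $\lceil 2w/3 \rceil \ge k$, so that $bw(M) \ge \lceil 2w/3 \rceil + 1 \ge k+1$, contradicting $bw(M) \le k$ and proving the corollary.

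To bound $bw(U_{w,2w})$ from below I first compute its connectivity function. Writing $n = 2w$ and $r = w$, a subset $X$ with $|X| = x$ satisfies $\lambda(X) = r(X) + r(E \setminus X) - r = \min(x,r) + \min(n-x,r) - r = \min(x,\, n-x,\, r)$, so the order of the separation displayed by $X$ is $\min(x,\, n-x,\, w) + 1$. In any branch decomposition of $U_{w,2w}$ the underlying tree $T$ is cubic with $n$ leaves, each edge of $T$ displays a bipartition of the ground set into parts of sizes $a$ and $n-a$, and the width of that edge is $\min(a,\, n-a,\, w) + 1$.

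The crux is to show that \emph{every} branch decomposition has an edge whose smaller side contains at least $\lceil n/3 \rceil$ elements. Here I will apply a centroid argument to $T$, weighting each leaf by $1$ and each internal vertex by $0$. The weighted centroid theorem provides an internal vertex $v$ such that each of the three components of $T - v$ carries leaf-weight at most $n/2$; call these counts $n_1, n_2, n_3$, so $n_1 + n_2 + n_3 = n$ and each $n_i \le n/2$. The largest of them is at least $\lceil n/3 \rceil$, and because that component has at most $n/2 \le n - n_i$ leaves, the edge of $T$ joining it to $v$ displays a bipartition whose \emph{smaller} side has at least $\lceil n/3 \rceil$ leaves. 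Since $w = n/2 \ge \lceil n/3 \rceil$, the width of this edge is $\min(n_i,\, n-n_i,\, w)+1 \ge \lceil n/3 \rceil + 1 = \lceil 2w/3 \rceil + 1$, which gives exactly the lower bound $bw(U_{w,2w}) \ge \lceil 2w/3 \rceil + 1$ required above.

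I expect the main obstacle to be this lower bound on $bw(U_{w,2w})$: the invocation of Lemma \ref{sqwidth} and the minor-monotonicity of branch-width are routine, but pinning down $bw(U_{w,2w}) \ge \lceil 2w/3 \rceil + 1$ needs both the uniform-matroid connectivity computation and the centroid argument, and one must keep careful track of the $+1$ convention in the order of a separation so that it matches the convention under which the stated constant $\lceil 3k/2 \rceil$ is the correct threshold. It is worth noting that only the lower bound on the branch-width of the uniform minor is needed; no matching upper bound, and no analysis of squares other than a single largest one, enters the argument.
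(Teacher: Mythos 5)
Your proof is correct and follows the paper's route exactly: assume the square-width $w$ is at least $\lceil 3k/2 \rceil$, use Lemma \ref{sqwidth} to extract a $U_{w,2w}$-minor, and conclude $bw(M) \geq bw(U_{w,2w}) \geq \lceil 2w/3 \rceil + 1 \geq k+1$, contradicting $bw(M) \leq k$. The only divergence is in how the branch-width of $U_{w,2w}$ is handled: the paper disposes of it by citation (Exercise 14.2.5 of Oxley), whereas you prove the needed lower bound $bw(U_{w,2w}) \geq \lceil 2w/3 \rceil + 1$ from first principles, computing the connectivity function $\lambda(X) = \min(|X|, n-|X|)$ of $U_{w,2w}$ (an identity that is valid here precisely because $n = 2r$, so $|X| < r$ and $n - |X| < r$ cannot hold simultaneously) and then using a leaf-weighted centroid of the decomposition tree to force some edge to display a bipartition whose smaller side has at least $\lceil n/3 \rceil$ elements. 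That argument is sound --- the weighted centroid fact is standard, for $n \geq 3$ the centroid cannot be a leaf, and you correctly observe that only the lower bound (not the matching upper bound) on $bw(U_{w,2w})$ is required, with the $\lambda + 1$ convention tracked consistently --- so the net effect is a self-contained proof where the paper has a two-line citation; the trade-off is length versus reliance on an external exercise.
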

	
	\begin{proof}
		Assume that the square width $ j $ is $\lceil 3k/2 \rceil $. Then by Lemma \ref{sqwidth}, $ M[P,Q] $ has $ U_{j,2j} $ minor. But the branch-width of $ U_{j,2j} $ is $ \lceil 2j/3 \rceil + 1 $ (Exercise 14.2.5, \cite{book}), and hence the branch-width of $ U_{j,2j} $ is at least $ k+1 $. But this is a contradiction to our assumption that the branch-width of $ M[P,Q] $ is at most $ k $.
	\end{proof}
	
	It is not difficult to prove that a class of lattice-path matroids has bounded branch-width if and only if it has bounded square-width.

	\begin{definition} \label{pull}
		Let $ [P,Q] $ be a path presentation on $ [m+r] $ with a proper $ k \times k $ square at $ i $. We define two new lattice path presentations from $ [P,Q] $ as follows:
		Let \\
		\begin{equation*}
		\begin{split}
		B_{i}(P) & = P_{i}  + k  \text{ copies of North steps, and }\\
		B_{i}(Q) & = Q_{i} + k  \text{ copies of East step}s		
		\end{split}
		\end{equation*} 
		be the lattice paths $ [B_{i}(P),B_{i}(Q)] $, and let 	 
		\begin{equation*}
		\begin{split}
		T_{i}(P) & =  k \text{ copies of East steps} + P_{i}^{'}  \text{ and} \\
		T_{i}(Q) & =  k  \text{ copies of North steps } + Q_{i}^{'}  
		\end{split}
		\end{equation*}
		\\
		be the lattice paths $ [T_{i}(P),T_{i}(Q)] $.
		
	\end{definition}
	
	Note that $ M([B_{i}(P),B_{i}(Q)]) $ is a lattice path matroid on the ground set $ [i+k] $ but we relabel  $ M([T_{i}(P),T_{i}(Q)]) $ to be a lattice path matroid on the ground set $ [i-k,m+r] $, so as to retain the same elements as in the original matroid. This will prove beneficial in the gluing operation that follows.

	\begin{figure}[h]
		\centering
		\includegraphics[scale=0.4]{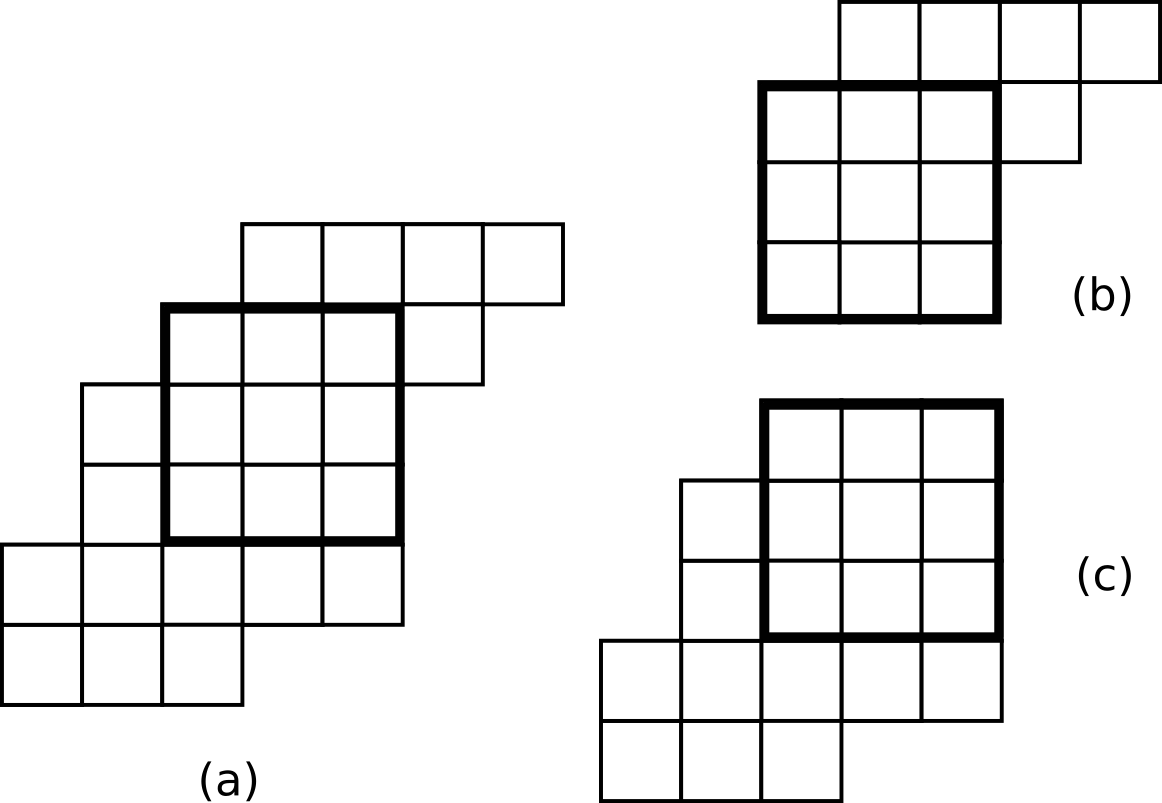} 
		\caption{(a) A proper $k \times k$ square in $ [P,Q] $ (b) $ [T_{i}(P),T_{i}(Q)] $ (c) $ [B_{i}(P),B_{i}(Q)] $}
		\label{top}
	\end{figure}
	
	In Figure \ref{top}, $ B_{7}(P) = P_{7}+NNN $ and $ B_{7}(Q) = Q_{7}+EEE $ and $ P_{7}^{'} = NNENEN $ and $ Q_{7}^{'} = ENEEEE $. Thus $ T_{7}(P) = $ $ EEE+P_{7}^{'} $  and 
	$ T_{7}(Q) = $ $ NNN +Q_{7}^{'} $.\\
	
	An intuitive property of these new path presentations is proved below:
	
	\begin{lemma} \label{bottom-minor}
		
		Let $ [B_{i}(P),B_{i}(Q)] $ and $ [T_{i}(P),T_{i}(Q)] $ be as in Definition \ref{pull}. Then $ [B_{i}(P),B_{i}(Q)], [T_{i}(P),T_{i}(Q)] $ are minors of $ [P,Q]$.
		
	\end{lemma}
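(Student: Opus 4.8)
The plan is to realise each of $[B_i(P),B_i(Q)]$ and $[T_i(P),T_i(Q)]$ as the result of a sequence of single-element deletions and contractions of $[P,Q]$, using the path description of minors recorded in the \textbf{Minors} paragraph above. For $[B_i(P),B_i(Q)]$ I would operate only on the \emph{tail} elements, those with label $>i$, so that the first $i$ steps $P_i,Q_i$ (and hence the $k\times k$ square at $i$) are never disturbed; for $[T_i(P),T_i(Q)]$ I would operate only on the \emph{head} elements, those with label $\le i$. Fix coordinates so that after $i$ steps $P$ is at $(a,b)$ and $Q$ is at $(a-k,b+k)$, where $a=m(P_i)$ and $b=r(P_i)$; this is exactly the square hypothesis. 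Since $B_i(P)=P_i+N^k$ and $B_i(Q)=Q_i+E^k$ both end at $(a,b+k)$, the presentation $[B_i(P),B_i(Q)]$ is the one whose tail is the minimal ``cap'' $N^k$ on $P$ and $E^k$ on $Q$, and I would show that $[P,Q]$ reduces to it by induction on the tail length $t=m+r-i$.

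For the base case $t=k$ I would observe that the square forces the tail to be the cap already: $Q$ has $y$-coordinate $b+k$ after $i$ steps and can only increase it, so $r\ge b+k$, while $t=k$ gives $r-b\le k$; hence $r=b+k$, which forces $a=m$, so the tail of $P$ is $N^k$ and the tail of $Q$ is $E^k$ and there is nothing to do. For the inductive step $t>k$ I would use a dichotomy. If the tail of $P$ contains an East step, delete the element $x$ at the position of the \emph{last} East step of $P$; otherwise the tail of $P$ is entirely North (with more than $k$ North steps), and I would contract the element $y$ at the position of the \emph{last} North step of $Q$ lying in the tail. In the first case $x$ is not an isthmus and in the second $y$ is not a loop, so the relevant single-element rule applies and removes one tail East step (respectively one tail North step) from \emph{both} paths. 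The reduced presentation keeps the same $P_i,Q_i$, hence the same $k\times k$ square at $i$ and the same associated $[B_i(P),B_i(Q)]$, and its tail is one shorter, so the induction closes; note also that when the dust settles the surviving tail of $P$ is automatically $N^k$ (all its East steps have been deleted) and that of $Q$ is automatically $E^k$ (all its North steps have been contracted), so no ordering issue arises.

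The step I expect to be the main obstacle is verifying that each operation is genuinely confined to the tail, i.e.\ that the \emph{partner} step deleted or contracted from the other path also lies beyond position $i$. For the deletion this reduces to showing that when $x$ is the last East step of $P$ there is an East step of $Q$ at or after $x$, and this is exactly where the hypothesis that $P$ never rises above $Q$ is essential: if $Q$ took only North steps from $x$ onward then $Q$ would already have $x$-coordinate $m$ at position $x-1$, forcing $P$ (which lies weakly below $Q$, equivalently has $x$-coordinate at least that of $Q$ at every position) to have $x$-coordinate $m$ at position $x-1$ as well, contradicting $p_x=E$. The contraction case is the mirror image and uses that the tail of $P$ is all North. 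Once this geometric fact is established, the classification of the chosen element and the book-keeping are routine, and an entirely symmetric argument---now peeling head elements, inducting on $i$, and interchanging the roles of $P$ and $Q$ and of deletion and contraction---shows that $[T_i(P),T_i(Q)]$ is likewise a minor of $[P,Q]$.
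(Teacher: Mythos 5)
Your proposal is correct and takes essentially the same route as the paper: both realise $[B_{i}(P),B_{i}(Q)]$ by deleting the East steps in the tail of $P$ and contracting the North steps in the tail of $Q$ (and symmetrically for $[T_{i}(P),T_{i}(Q)]$), using the step-counting consequences of the $k \times k$ square at $i$. The paper's version is terser---it derives the six counting equations and then simply asserts that the reductions can be performed---whereas you additionally organise the removals into an explicit induction and verify the key point the paper leaves implicit, namely that the partner step removed from the other bounding path also lies beyond position $i$, so the square is never disturbed.
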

	
	\begin{proof}
		
		We know that $ P = P_{i} + P_{i}^{'} $ and $ Q = Q_{i} + Q_{i}^{'} $, where $ 1 < i < m+r $. Then we know that 		
		$ r(P_{i}) + r(P_{i}^{'}) = r $, 
		$ r(Q_{i}) + r(Q_{i}^{'}) = r $, 
		$ m(P_{i}) + m(P_{i}^{'}) = m $, and 
		$ m(Q_{i}) + m(Q_{i}^{'}) = m $.
		
		Also, by definition of  paths $ P_{i} $ and $ Q_{i} $, 		
		$ m(P_{i}) - m(Q_{i}) = k $ and 
		$ r(Q_{i}) - r(P_{i}) = k $.
		
		Rearranging the above set of six equations, it is easy to see that 		
		$ r(P_{i}^{'}) - r(Q_{i}^{'}) = k $ and 
		$ m(Q_{i}^{'}) - m(P_{i}^{'}) = k $.
		
		These equations display the fact that $ Q_{i}^{'} $ has $ k $ more East steps than $ P_{i}^{'} $ and $ P_{i}^{'} $ has $ k $ more North steps than $ Q_{i}^{'} $. Since deletion requires the removal of East steps from both paths, and contraction requires removal of North steps from both paths, we can delete all the East steps from $ P_{i}^{'} $ and retain $ k $ East steps in $ Q_{i}^{'} $. Similarly, we can contract all the North steps from $ P_{i}^{'} $ and still retain $ k $ North steps in the same. 
		
		After these operations we obtain two paths $ P^{'} $ and $ Q^{'} $ that are of the form:		
		$ P^{'} = P_{i} + kN $ and
		$ Q^{'} = Q_{i} + kE $.
		
		These paths are nothing but $B_{i}(P) $ and $ B_{i}(Q) $.
		
		The same deductions as above imply that $ [T_{i}(P),T_{i}(Q)] \leq [P,Q] $.
	\end{proof}
	
	\begin{definition} \label{glue}
		If we have a pair of lattice path presentations $ [P_{B},Q_{B}] $ and $ [P_{T},Q_{T}] $ such that the following conditions are satisfied:
		
		\begin{enumerate}
			
			\item[(i)] The last $ k $ steps in $ P_{B} $ are North steps, i.e., $ P_{B} = P_{B}^{'} + k $ North steps
			\item[(ii)] The last $ k $ steps in $ Q_{B} $ are East steps, i.e., $ Q_{B} = Q_{B}^{'} + k $ East steps
			\item[(iii)] The first $ k $ steps in $ P_{T} $ are East steps, i.e., $ P_{T} = k $ East steps $ + P_{T}^{'}  $ 
			\item[(iv)] The first $ k $ steps in $ Q_{T} $ are North steps, i.e., $ Q_{T} = k $ North steps $ + Q_T^{'} $,
			
		\end{enumerate}
		
		then we define a gluing operation as $ GL([P_{B},Q_{B}],[P_{T},Q_{T}]) = [P,Q] $, where $ P = P_{B}^{'} + P_{T}^{'} $, and  $ Q =  Q_{B}^{'}  + Q_{T}^{'} $. \\
		
	\end{definition} 
	In other words, if we start with a lattice path $ [P,Q] $ and `pull them apart' at a $ k \times k $ square to give rise to two new lattice paths $ [B_{i}(P),B_{i}(Q)] $ and $ [T_{i}(P),T_{i}(Q)] $, then $ GL( [B_{i}(P),B_{i}(Q)]  ,[T_{i}(P),T_{i}(Q)]  ) $ will lead us back to the lattice path $ [P,Q] $ that we originally had. This fact is illustrated in the following lemma: 
	
	\begin{lemma} \label{same}
		
		Let $ [P,Q] $ be a path presentation with a $ k \times k $ square at $ i $, where $ P = P_{i} + P_{i}^{'} $ and $ Q = Q_{i} + Q_{i}^{'} $. Construct the two lattice paths  $ [B_{i}(P),B_{i}(Q)] $ and $ [T_{i}(P),T_{i}(Q)] $. Then $ GL( [B_{i}(P),B_{i}(Q)]  ,[T_{i}(P),T_{i}(Q)]  )  = [P,Q]$.
		
	\end{lemma}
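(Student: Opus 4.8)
The plan is to recognise this statement as a direct unwinding of Definitions \ref{pull} and \ref{glue}, so that the proof reduces to checking that the gluing operation is applicable to the two constructed presentations and then computing its output. First I would set $[P_B, Q_B] := [B_i(P), B_i(Q)]$ and $[P_T, Q_T] := [T_i(P), T_i(Q)]$ and verify that this pair satisfies the four hypotheses (i)--(iv) of Definition \ref{glue}. This is immediate from Definition \ref{pull}: since $B_i(P) = P_i + k$ North steps and $B_i(Q) = Q_i + k$ East steps, conditions (i) and (ii) hold with $P_B' = P_i$ and $Q_B' = Q_i$; and since $T_i(P) = k$ East steps $+\, P_i'$ and $T_i(Q) = k$ North steps $+\, Q_i'$, conditions (iii) and (iv) hold with $P_T' = P_i'$ and $Q_T' = Q_i'$.

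With the hypotheses verified, I would simply apply the formula defining $GL$. By Definition \ref{glue}, $GL([P_B, Q_B], [P_T, Q_T]) = [P,Q]$ where the new lower path is $P_B' + P_T'$ and the new upper path is $Q_B' + Q_T'$. Substituting the identifications above gives $P_B' + P_T' = P_i + P_i'$ and $Q_B' + Q_T' = Q_i + Q_i'$, which are exactly the decompositions $P = P_i + P_i'$ and $Q = Q_i + Q_i'$ assumed in the statement. Hence the glued presentation is literally the original $[P,Q]$.

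The only point needing care --- and the closest thing to an obstacle --- is the bookkeeping on element labels rather than on the paths themselves. The gluing operation discards the $k$ appended steps of the bottom presentation and the $k$ prepended steps of the top presentation, and concatenates what survives. I would observe that the surviving steps of $[B_i(P),B_i(Q)]$ are precisely $P_i, Q_i$, carrying the original elements $\{1, \ldots, i\}$, while the surviving steps of $[T_i(P), T_i(Q)]$ are precisely $P_i', Q_i'$, carrying the original elements $\{i+1, \ldots, m+r\}$; the relabelling of $M([T_i(P), T_i(Q)])$ described after Definition \ref{pull} is exactly what guarantees this. These two label sets are disjoint and together equal $[m+r]$, so the equality $GL(\ldots) = [P,Q]$ holds on the nose, with the original labelling, and not merely up to isomorphism. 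Since every step is forced by the definitions, I expect no genuine difficulty beyond confirming this label alignment.
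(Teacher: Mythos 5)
Your proof is correct and follows exactly the same route as the paper's: verify that the pulled-apart presentations satisfy conditions (i)--(iv) of Definition \ref{glue} with $P_B' = P_i$, $Q_B' = Q_i$, $P_T' = P_i'$, $Q_T' = Q_i'$, then concatenate to recover $P$ and $Q$. Your extra remark on label bookkeeping is a welcome addition that the paper relegates to the discussion following Definition \ref{pull} rather than the proof itself.
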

	
	\begin{proof}
		By Definition \ref{pull}, paths $ B_{i}(P),B_{i}(Q),T_{i}(P)$ and  $ T_{i}(Q) $ satisfy conditions (i)  -  (iv)  in Definition \ref{glue}. Thus $ P_{B}^{'} = P_{i} $, $ Q_{B}^{'} = Q_{i} $, $ P_{T}^{'} = P_{i}^{'} $ and $ Q_{T}^{'} = Q_{i}^{'} $. Thus $ P_{B}^{'} + P_{T}^{'} = P_{i} + P_{i}^{'} = P $ and $ Q_{B}^{'}  + Q_{T}^{'} = Q_{i} +Q_{i}^{'} = Q $. This completes the proof.
	\end{proof}
	
	We are now well-equipped to prove the central lemma which proves pivotal in proving the main theorem.

	\begin{lemma} \label{imp}
		
		Let $ [P,Q] $ be a path presentation with a proper $ k\times k $ square at $ i $. Let $ [B_{i}(P),B_{i}(Q)]  $ and $ [T_{i}(P),T_{i}(Q)] $ be as in Definition \ref{pull}. Let $ [P_{B},Q_{B}] $ be a minor of $ [B_{i}(P),B_{i}(Q)]  $ with a $ k \times k $ square at the top and $ [P_{T},Q_{T}] $ be a minor of $ [T_{i}(P),T_{i}(Q)]  $, with a $ k \times k $ square at the bottom. Then $ GL([P_{B},Q_{B}],[P_{T},Q_{T}]) $ is a minor of $ [P,Q] $.
		
	\end{lemma}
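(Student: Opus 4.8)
The plan is to realise the glued presentation $GL([P_B,Q_B],[P_T,Q_T])$ as the outcome of performing, directly on $[P,Q]$, the minor operations that produce $[P_B,Q_B]$ from $[B_i(P),B_i(Q)]$ together with those that produce $[P_T,Q_T]$ from $[T_i(P),T_i(Q)]$. The guiding observation is a separation of ground sets. By the relabelling introduced after Definition \ref{pull}, $[B_i(P),B_i(Q)]$ lives on the bottom elements $\{1,\dots,i\}$ of $[P,Q]$ together with $k$ fresh square-boundary steps, while $[T_i(P),T_i(Q)]$ lives on the top elements $\{i+1,\dots,m+r\}$ together with its own $k$ square-boundary steps. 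The gluing deletes the square boundaries (the terminal $k$ North steps of $P_B$ and $k$ East steps of $Q_B$, and the initial $k$ East steps of $P_T$ and $k$ North steps of $Q_T$) and concatenates what remains, so the elements surviving in $GL([P_B,Q_B],[P_T,Q_T])$ form a subset of $\{1,\dots,i\}$ together with a disjoint subset of $\{i+1,\dots,m+r\}$. The whole strategy is to exploit this disjointness: bottom-side operations and top-side operations commute on $[P,Q]$ and can be applied independently.

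First I would dispose of the base case: when $[P_B,Q_B]=[B_i(P),B_i(Q)]$ and $[P_T,Q_T]=[T_i(P),T_i(Q)]$, Lemma \ref{same} gives $GL([P_B,Q_B],[P_T,Q_T])=[P,Q]$, a minor of itself. I would then induct on the total number of single-element deletions and contractions used to obtain $[P_B,Q_B]$ and $[P_T,Q_T]$. For the inductive step, suppose $[P_B,Q_B]$ arises from some $[P_B^{0},Q_B^{0}]\leq[B_i(P),B_i(Q)]$ by deleting or contracting a single element $x$, where $[P_B^{0},Q_B^{0}]$ still carries a $k\times k$ square at the top. Since $[P_B,Q_B]$ retains the full $k\times k$ square, I may assume the $2k$ boundary steps of that square are inherited unchanged, so that $x$ is a bottom element, $x\leq i$. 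By the inductive hypothesis, $GL([P_B^{0},Q_B^{0}],[P_T,Q_T])$ is a minor of $[P,Q]$, and I would then show that performing the same deletion or contraction of $x$ on that minor yields exactly $GL([P_B,Q_B],[P_T,Q_T])$. The symmetric argument, with $x>i$, handles a step taken on the top side $[P_T,Q_T]$.

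The main obstacle is precisely the compatibility claim at the boundary. The single-element deletion and contraction rules refer to the first or last East or North step \emph{at or before/after} $x$, and these witnesses could in principle differ between the short presentation $[B_i(P),B_i(Q)]$, whose top is only the $k\times k$ square, and the long presentation $[P,Q]$, whose top is the full block $P_i^{'},Q_i^{'}$. I would resolve this by using the square as a buffer. Since the operation preserves a full $k\times k$ square at the top, the East step removed from the upper path under a deletion (or the North step removed from the lower path under a contraction) can never be one of the $k$ square-boundary steps, for otherwise the square would shrink; hence the affected step lies strictly within the bottom block $Q_i$ (respectively $P_i$), where $[B_i(P),B_i(Q)]$ and $[P,Q]$ agree verbatim and the rule therefore produces the identical path transformation on each. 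Verifying this buffering carefully, together with its dual on the top side, is the technical heart of the argument; once it is in place, the inductive correspondence closes and $GL([P_B,Q_B],[P_T,Q_T])$ is exhibited as a minor of $[P,Q]$.
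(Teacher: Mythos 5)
Your induction scheme (on the number of single-element operations) and your treatment of operations on elements $x \leq i$ coincide with the paper's own proof: your ``buffering'' claim for $x \leq i$ is essentially the paper's Case (ii) argument, and it is sound --- if the East step removed from the upper path were a square-boundary step, then $Q_{i}$ would contain no East step at or after $x$, so its last step would be North, the new upper path would end in only $k-1$ East steps, and the hypothesis that $[P_{B},Q_{B}]$ has a $k \times k$ square at the top would fail.

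However, your reduction to $x \leq i$ is a genuine gap. You dismiss operations on the square-boundary elements by asserting that removing a boundary step ``would shrink the square,'' but that is false: the square at the top of the minor need not consist of the same $2k$ steps as the square of $[B_{i}(P),B_{i}(Q)]$. Concretely, suppose the $i$-th step of $Q_{i}$ is an East step, and delete the boundary element $i+1$ from $[B_{i}(P),B_{i}(Q)]$. This removes the boundary East step at position $i+1$ from the upper path and the last East step of $P_{i}$ from the lower path, yet the result, $[\,(P_{i}\text{ minus its last East step})+kN,\ Q_{i}+(k-1)E\,]$, still ends with $k$ North steps below and $k$ East steps above: the square has been replenished by the East step at position $i$ of $Q_{i}$. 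So minors satisfying all the hypotheses of the lemma do arise from operations on boundary elements, and your induction never reaches them; the assumption ``the $2k$ boundary steps are inherited unchanged'' needs proof, not assertion. This is exactly the case $e > i$ that the paper treats separately: it shows that if the top square survives such an operation, then the $i$-th steps of the paths are forced to point the right way (East for deletion, North for contraction), whence the operation on $e > i$ produces the \emph{same} presentation as the corresponding operation on element $i$ --- only after this identification may one assume that all operations act on $\{1,\dots,i\}$. Your proposal is incomplete without this argument, and the same care is needed to justify your assumption that the intermediate presentation $[P_{B}^{0},Q_{B}^{0}]$ in the induction still carries a $k \times k$ square at the top.
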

	
	\begin{proof}
		
		We prove this by induction on $ n $, where $ n $ is the sum of size differences of $ \{ [P_{B},Q_{B}],[B_{i}(P),B_{i}(Q)] \} $ and $ \{ [P_{T},Q_{T}],[T_{i}(P),T_{i}(Q)] \}$. Note that $ [B_{i}(P),B_{i}(Q)]  $ has a $ k \times k $ square at the top and $ [T_{i}(P),T_{i}(Q)]  $, has a $ k \times k $ square at the bottom.
		
		When $ n =1 $, either $ \{ [P_{B},Q_{B}],[B_{i}(P),B_{i}(Q)] \} $ or $ \{ [P_{T},Q_{T}],[T_{i}(P),T_{i}(Q)] \}$ differ by a single element in the ground set. Let the former be true, in which case, an element, say $ e $, was deleted or contracted from $ [B_{i}(P),B_{i}(Q)] $ to obtain $ [P_{B},Q_{B}] $.
		
		If $ e \leq i $, then $ e $ is a step in the sub-strings $ P_{i} $ and $ Q_{i} $. We consider the different cases where $ e $ is a North step and an East step in the sub-strings $ P_{i} $ and $ Q_{i} $. 
		
		Case (i): $ e $ is an East step in both sub-strings
		
		We can decompose the paths as $ P = P_{0} + E + P_{1} + P_{i}^{'} $ and $ Q = Q_{0} + E + Q_{1} + Q_{i}^{'} $, where $ P_{0} + E + P_{1} = P_{i} $, $ Q_{0} + E + Q_{1} = Q_{i} $ and the East steps represent the $ i^{th} $ position. Then, it is obvious that we have to remove the East steps from both paths. Thus,
		\[ [B_{i}(P),B_{i}(Q)] = [P_{i} + kN, Q_{i} + kE ]
		= [P_{0} + E + P_{1} +kN, Q_{0} + E + Q_{1} + kE]. \]
		Therefore,
		\[ [B_{i}(P),B_{i}(Q)]\backslash e = [P_{0} + P_{1} +kN, Q_{0} + Q_{1} + kE ] = [P_{B},Q_{B}]. \]
		Also,
		\[ [P_{T},Q_{T}] = [T_{i}(P),T_{i}(Q)] = [kE + P_{i}^{'}, kN + Q_{i}^{'}]. \]
		Hence,
		\[ GL([P_{B},Q_{B}],[P_{T},Q_{T}]) = [P_{0} + P_{1} +P_{i}^{'}, Q_{0} + Q_{1} + Q_{i}^{'} ]. \]
		On the other hand, 
		\[ [P,Q] =  [P_{0} + E + P_{1} +P_{i}^{'}, Q_{0} + E + Q_{1} + Q_{i}^{'} ] \text{ and thus}\]
		\[ [P,Q]\backslash e =  [P_{0} + P_{1} +P_{i}^{'}, Q_{0} + Q_{1} + Q_{i}^{'} ] = GL([P_{B},Q_{B}],[P_{T},Q_{T}]). \]
		
		Case (ii): $ e $ is an East step in $ P $, but a North step in $ Q $
		
		The $ [B_{i}(P),B_{i}(Q)] $ in this case can be presented as :
		\[ [B_{i}(P),B_{i}(Q)] = [P_{i} + kN, Q_{i} + kE ]
		= [P_{0} + E + P_{1} +kN, Q_{0} + N + Q_{1} + kE]. \]
		Clearly, it is enough to remove the East step from $ P $. Now, $ Q = Q_{0} + N + Q_{1} + Q_{i}^{'} $. Then $ Q_{1} $ will contain an East step. Else, suppose that $ Q_{1} $ does not contain any East step. So we will have to remove an East step from $ Q_{i}^{'} $. Then, when we delete $ e $ from $ P $, we are essentially removing an East step from $ P_{i} $, but not $ Q_{i} $. But this is impossible as there is a $ k \times k $ square at $ i $ in both $ [B_{i}(P),B_{i}(Q)]  $ and $ [P_{B},Q_{B}] $  and hence, $ P_{i} $ has exactly $ k $ more East steps than $ Q_{i} $. Thus we will be able to find an East step in $ Q_{1} $ which can be removed. Then we rewrite $ Q_{i} $ as $ Q_{i} = Q_{0}^{'}  + E + Q_{1}^{'} $, where $ E $ is the first East step in $ Q_{1} $. Hence,
		
		\[ [B_{i}(P),B_{i}(Q)] = [P_{0} + E + P_{1} +kN, Q_{0}^{'} + E + Q_{1}^{'} + kE] \text{ and} \]
		\[ [B_{i}(P),B_{i}(Q)]\backslash e = [P_{0} + P_{1} +kN, Q_{0}^{'} + Q_{1}^{'} + kE]. \]
		Similar to the case above,
		\[ GL([P_{B},Q_{B}],P_{T},Q_{T}]) = [P_{0} + P_{1} +P_{i}^{'}, Q_{0}^{'} + Q_{1}^{'} + Q_{i}^{'} ]. \]
		Also,
		\[ [P,Q]\backslash e =  [P_{0} + P_{1} +P_{i}^{'}, Q_{0}^{'} + Q_{1}^{'} + Q_{i}^{'} ] = GL([P_{B},Q_{B}],[P_{T},Q_{T}]). \]
		
		Case (iii): $ e $ is a North step in $ P $, but an East step in $ Q $
		
		This case can be verified with very similar arguments as in Case (ii).
		
		Case (iv): $ e $ is a North step in both $ P $ and $ Q $
		
		The same conclusion follows by the arguments made in previous cases. \\
		From the above cases we observe that when $ e $ is deleted in $ [B_{i}(P),B_{i}(Q)] $, the changes are reflected in $ P_{i} $ and $ Q_{i} $ alone, as we have seen above. The same argument can be extended to the case of contraction of $ e $. Thus we conclude that when $ e \leq i $, $ GL([P_{B},Q_{B}],[P_{T},Q_{T}]) $ is a minor of $ [P,Q] $.\\

		Now, if $ e>i$, we claim that this has the same effect as deleting or contracting $ i $.
		
		Case (i) : $ e $ was deleted
		
		We know that deletion of an element involves removal of an East step from both paths.  The last $ k $ steps of  $ B_{i}(Q) $ are East steps alone, as $ [B_{i}(P),B_{i}(Q)] $ has a $ k \times k $ square at the top. But so does $ Q_{B} $, and thus we require that $ B_{i}(Q) $ has only East steps as the last $ k+1 $ steps, that is, the $ i^{th} $ element in $ B_{i}(Q) $ and consequently in $ Q_{i} $ is an East step. Hence deletion of any of the last $ k $ East steps from $ B_{i}(Q) $ is the same as deleting the  $ i^{th} $ East step. 
		
		Now, $ B_{i}(P) $ has North steps only as the last $ k $ steps. Then the $ i^{th} $ step can either be a North step or an East step. If it is an East step, then since deletion of an element in a lower bounding path is equivalent to deleting the East step at or before the corresponding element, deletion of any of the last $ k $ North steps will result in the removal of the $ i^{th} $ East step. The $ i^{th} $ step in $P_{i} $ is also an East step and thus deletion of $ e $ is the deletion of the $ i^{th} $ element. 
		
		But, if the $ i^{th} $ step is a North step, then deletion is the removal of the first East step that comes before the last $ k + 1 $ North steps. But this is the same as deletion of the $ i^{th} $ North step, both in  $ B_{i}(P) $ and $ P_{i} $. Hence, once again it is the deletion of the $ i^{th} $ element. Thus deleting any element $ e > i $ and then using the gluing operation to join the different path presentations together is the same as gluing them together and then deleting the $ i^{th} $ element. \\
		
		Case(ii) : $ e $ was contracted
		
		This case yields to a similar argument as case (i).\\
		
		So the result holds true when $ n = 1 $. Assume that it holds true for $ n = k $. We prove the result for $ n = k + 1 $. 
		
		Let $ B = M([B_{i}(P),B_{i}(Q)]) $, $ T = M([T_{i}(P),T_{i}(Q)]) $, $ B^{'} = M([P_{B},Q_{B}]) $ and $ T^{'} = M([P_{T},Q_{T}]) $. We then need to prove that $ GL(B^{'},T^{'}) $ is a minor of $ GL(B,T) $. Since $ B^{'} $, $ T^{'} $ are minors of $ B $, $ T $ respectively, $ B^{'} = B\backslash I/J $, for disjoint sets $ I $ and $ J $. Also, $ T^{'} = T\backslash I^{'}/J^{'} $ for disjoint sets $ I^{'} $ and $ J^{'} $. Since \[ (|B|-|B^{'}|) + (|T| - |T^{'}|) = k+1, \] one of $ I,J,I^{'},J^{'} $ is non-empty. Without loss of generality, let $ I $ be non-empty as the cases when $ J $, $ I^{'} $ or $ J^{'} $ are non-empty are identical. 
		
		Let $ e \in I $. Then \[ (|B|-|B\backslash e |) + (|T| - |T|) = 1 \] and hence by the case proved for when $ n = 1 $, $ GL(B\backslash e,T) $ is a minor of $ GL(B,T) $. Also, \[ (|B\backslash e|-|B^{'}|) + (|T| - |T^{'}|) < k+1. \] Thus, by our induction hypothesis, $ GL(B^{'},T^{'}) $ is a minor of $ GL(B\backslash e,T) $ which is already a minor of $ GL(B,T) $. This finishes the proof for the case $ n = k + 1  $ and completes our induction argument. 
	\end{proof}
	
	\section{Proof of the main theorem}
	
	With the aid of the above lemma, we are now ready to prove that the class of lattice path matroids with bounded square-width is well-quasi-ordered. We use the $ minimal $ $ bad $ $ sequence $ argument in the proof of the same. An infinite sequence $ a_{1},a_{2},a_{3},\ldots $ is $ bad $ if there does not exist an $ i $ and $ j $ such that $ a_{i} \leq a_{j}$. Otherwise, the sequence is good. Also, a subsequence $ (a_{i_{1}},a_{i_{2}},\ldots) $ is bad if it is a bad sequence. In essence, a graph class or matroid class is well-quasi-ordered if and only if it does not have a bad sequence, as graph and matroid classes do not contain infinite decreasing sequences.
	We require a lemma about bad sequences to explain the minimal bad sequence argument.
	
	\begin{lemma} \label{chain}
		
		Let $ a_{1},a_{2},\ldots,$ be an infinite sequence with no bad subsequences. Then there exists $ i_{1}<i_{2}<i_{3}<\ldots$, such that $ a_{i_{s}} \leq a_{i_{s+1}} $ $ \forall $ $ s $.
		
	\end{lemma}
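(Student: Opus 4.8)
The plan is to run a dichotomy on the indices according to whether an element has a later element above it, and then to use the hypothesis to rule out the bad case. Call an index $i$ \emph{terminal} if there is no $j>i$ with $a_i \le a_j$ (here, as is standard, a sequence is bad precisely when no pair of indices $i<j$ satisfies $a_i \le a_j$). Everything hinges on how many terminal indices occur, so the proof splits into the case of infinitely many and the case of finitely many.

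First I would suppose, for contradiction, that there are infinitely many terminal indices $i_1<i_2<\cdots$, and consider the subsequence $(a_{i_1},a_{i_2},\ldots)$. For any $s<t$ we have $i_t>i_s$, and since $i_s$ is terminal no index exceeding $i_s$ lies above $a_{i_s}$; in particular $a_{i_s}\not\le a_{i_t}$. Hence this subsequence is bad, contradicting the assumption that $a_1,a_2,\ldots$ has no bad subsequence. Therefore only finitely many indices are terminal, say all of them lie below some $N$.

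With the terminal indices exhausted, I would build the desired chain greedily. Choose any $i_1>N$. Since $i_1$ is not terminal there exists $i_2>i_1$ with $a_{i_1}\le a_{i_2}$; as $i_2>i_1>N$, the index $i_2$ is again non-terminal, so there is $i_3>i_2$ with $a_{i_2}\le a_{i_3}$, and so on. Iterating produces $i_1<i_2<i_3<\cdots$ with $a_{i_s}\le a_{i_{s+1}}$ for every $s$, which is exactly the required increasing chain.

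The argument is short, and the only real care needed is in the first step: one must check that the subsequence picked out by the (supposedly infinitely many) terminal indices is genuinely bad, i.e. that terminality at $i_s$ forbids $a_{i_s}\le a_{i_t}$ for \emph{every} later term of that subsequence, not merely the immediate successor. Note that no use is made of reflexivity or transitivity of $\le$, so the lemma is really a statement about an arbitrary binary relation together with the linear order on indices. Alternatively, one could reach the same conclusion in a single stroke via the infinite Ramsey theorem, two-colouring each pair $\{i,j\}$ with $i<j$ by whether $a_i\le a_j$: a homogeneous set in the first colour is the chain, while one in the second colour is a bad subsequence. The dichotomy above is simply the hands-on version of this, and I would prefer it here since it keeps the section self-contained.
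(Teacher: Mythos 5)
Your proof is correct, and it is a genuinely different argument from the one in the paper. You run the standard dichotomy on ``terminal'' indices (those with no later element above them): infinitely many terminal indices would themselves index a bad subsequence, so there are finitely many, and beyond them a greedy construction yields the ascending chain. The paper instead builds a directed graph on the terms whose edges are covering relations (an edge from $a_{i_s}$ to $a_{i_t}$ when $a_{i_s} < a_{i_t}$ with nothing in between), argues that this graph has finitely many components, and then applies K\"onig's infinity lemma to an infinite component to extract either a vertex of infinite degree (ruled out, as its neighbours would form a bad subsequence) or an infinite simple path, which is taken to be the chain. Your route is more elementary and more robust: as you observe, it uses nothing about $\le$ beyond its being a binary relation, and it is entirely self-contained. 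It also sidesteps delicate points in the paper's graph-theoretic argument, which glosses over the facts that in an infinite quasi-order comparable elements need not be linked by covering edges (dense orders have no covers at all, which threatens the ``finitely many components'' step) and that an infinite simple path in the underlying undirected covering graph may zigzag up and down rather than form a monotone chain. The Ramsey-theoretic alternative you sketch (two-colour the pairs $i<j$ by whether $a_i \le a_j$) is likewise correct, at the cost of invoking a heavier tool; your hands-on dichotomy is the better fit for keeping the section self-contained.
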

	
	\begin{proof}
		
		We begin by constructing a directed graph as follows: if $ a_{i_{s}} < a_{i_{t}} $ and there does not exist a $ k $ such that $ s,t \neq k $ and $ a_{i_{s}} \leq a_{i_{k}}  \leq a_{i_{t}} $, then we have a directed edge from vertex $ a_{i_{s}} $ to vertex $ a_{i_{t}} $. 
		
		Now, $ G $ has to be a directed graph with finitely many connected components. Else, suppose that $ G $ has infinitely many connected components $ G_{1},G_{2},\ldots $. Then selecting a vertex from each component provides us with infinitely many vertices $ v_{1},v_{2},\ldots $ that correspond to $ a_{i_{1}},a_{i_{2}} ,\ldots $ such that there does not exist $ s,t $ where $ a_{i_{s}} \leq a_{i_{t}} $, which contradicts our assumption that $ a_{1},a_{2},\ldots,$ does not have a bad subsequence.
		
		Clearly, at least one among the finite components of $ G $ must have infinite number of vertices. Hence, by K\"onig's Lemma \cite{konig}(see also \cite{Franchella1997}), this infinite graph either contains a vertex of infinite degree or an infinite simple path. If there exists such a vertex, then the matroids corresponding to the adjacent infinite number of vertices form a bad sequence. Thus $  G $ cannot contain a vertex of infinite degree. Hence it contains a simple path which completes our proof.
	\end{proof}

	For an arbitrary set $ \boldsymbol{\Sigma} $, let $ \boldsymbol{\Sigma}^{*} $ be the set of all finite sequences of $ \boldsymbol{\Sigma} $. Any quasi-order $ \leq $ on $ \boldsymbol{\Sigma} $ defines a quasi-order $ \preceq $ on $ \boldsymbol{\Sigma}^{*} $ as follows: $ (a_{1},a_{2},\ldots,a_{m}) $ $ \preceq $ $ (b_{1},b_{2},\ldots,b_{n}) $ if and only if there is an order-preserving injection $ f : \{ a_{1},\ldots,a_{m}\} \to \{b_{1},\ldots,b_{n}\} $ with $ a_{i} \leq f(a_{i}) $ for each $ i $. Then  Higman's Lemma \cite{PLMS:PLMS0326}
	states that $ (\boldsymbol{\Sigma}^{*},\preceq )$ is a well-quasi-order if $ (\boldsymbol{\Sigma},\leq)  $ is a well-quasi-order.

	\begin{theorem}
		
		Let $ \mathcal{L}_{k} $ be the class of lattice path matroids with square-width at most $ k $. Then, $ \mathcal{L}_{k} $ is well-quasi-ordered. 		
		
	\end{theorem}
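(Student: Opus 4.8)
The plan is to argue by contradiction using the minimal bad sequence technique of Nash-Williams, with Lemma \ref{imp} supplying the crucial ``gluing respects minors'' step. Suppose $\mathcal{L}_{k}$ is not well-quasi-ordered. No term of a bad sequence is a minor of a later one, so in particular all terms are distinct; since there are only finitely many lattice path matroids of each size, any bad sequence has unbounded sizes. I would first build a \emph{minimal} bad sequence $(M_{1},M_{2},\ldots)$ greedily: take $M_{1}$ of least size that begins some bad sequence, and having fixed $M_{1},\ldots,M_{n-1}$ take $M_{n}$ of least size for which $M_{1},\ldots,M_{n}$ still begins a bad sequence.

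Next I would arrange that every $M_{n}$ can be pulled apart. A presentation containing no \emph{proper} square of any positive width is extremely rigid: tracking the gap $d_{i}=m(P_{i})-m(Q_{i})$ shows such a presentation must be a single ``triangle'' $U_{j,2j}$ with $j\le k$, decorated by at most a bounded number of loops and coloops, and these are readily well-quasi-ordered. Hence only finitely many terms of the bad sequence lie in this rigid family, and after discarding them I may assume each $M_{n}$ has a proper $j_{n}\times j_{n}$ square with $1\le j_{n}\le k$; passing to a subsequence (pigeonhole on the finitely many possible widths) I may take $j_{n}=j$ constant. For each $n$ I fix such a square and apply Definition \ref{pull} to split $M_{n}$ into a bottom piece $B_{n}$ carrying a $j\times j$ square at the top and a top piece $T_{n}$ carrying one at the bottom. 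By Lemma \ref{same} we have $GL(B_{n},T_{n})=M_{n}$, by Lemma \ref{bottom-minor} both $B_{n}$ and $T_{n}$ are minors of $M_{n}$, and a short size count shows each is strictly smaller than $M_{n}$.

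The core of the argument is to show the set of pieces $X=\{B_{n}\}\cup\{T_{n}\}$ is itself well-quasi-ordered. If it were not, it would contain a bad sequence $(z_{1},z_{2},\ldots)$, where each $z_{i}$ is a piece of a parent $M_{\phi(i)}$; passing to a tail I may assume $z_{1}$ realises the minimum parent index $N=\min_{i}\phi(i)$, so that $\phi(i)\ge N$ for all $i$. Then $M_{1},\ldots,M_{N-1},z_{1},z_{2},\ldots$ is again bad: the only comparisons to verify are those of the form $M_{a}\le z_{i}$ with $a<N$, and since $z_{i}$ is a minor of $M_{\phi(i)}$ with $\phi(i)\ge N$, such a comparison would force $M_{a}\le M_{\phi(i)}$ with $a<\phi(i)$, contradicting the badness of $(M_{n})$. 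As $z_{1}$ is strictly smaller than $M_{N}$, this contradicts the minimality of $M_{N}$, so $X$ is well-quasi-ordered.

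Finally I would assemble the contradiction. Since $X$ is well-quasi-ordered, the sequence $(T_{n})$ has no bad subsequence, so Lemma \ref{chain} yields indices $n_{1}<n_{2}<\cdots$ with $T_{n_{1}}\le T_{n_{2}}\le\cdots$. The sequence $(B_{n_{s}})_{s}$ lies in $X$ and is therefore good, giving $s<t$ with $B_{n_{s}}\le B_{n_{t}}$, while the chain gives $T_{n_{s}}\le T_{n_{t}}$. Applying Lemma \ref{imp} to $M_{n_{t}}$, whose pull-apart pieces are exactly $B_{n_{t}}$ and $T_{n_{t}}$, the minor $B_{n_{s}}$ with its top $j\times j$ square and the minor $T_{n_{s}}$ with its bottom $j\times j$ square glue to a minor $GL(B_{n_{s}},T_{n_{s}})$ of $M_{n_{t}}$; but by Lemma \ref{same} this glued matroid is precisely $M_{n_{s}}$, so $M_{n_{s}}\le M_{n_{t}}$ with $n_{s}<n_{t}$, contradicting badness. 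I expect the main obstacle to be this final matching step: well-quasi-ordering of $X$ only supplies the two minor relations \emph{separately}, so the work lies in forcing them to hold simultaneously for a single pair of indices (achieved by freezing the $T$-coordinate along an ascending chain via Lemma \ref{chain}) and in checking that the resulting minors still carry the boundary squares demanded by Lemma \ref{imp}. The elementary reduction guaranteeing the existence of a proper square is the other point that will need care.
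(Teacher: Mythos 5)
Your proposal is essentially correct, and its skeleton---greedy minimal bad sequence, pulling apart at a proper square, well-quasi-ordering of the set of pieces via the parent-index argument, then Lemma \ref{chain} on one coordinate and goodness on the other, finishing with Lemmas \ref{same} and \ref{imp}---is the same as the paper's. The genuine difference is how you guarantee that every term can be pulled apart. The paper inducts on square-width: it discards the finitely many terms lying in $\mathcal{L}_{k}$ (induction hypothesis) and the finitely many nested terms, and then asserts that every surviving presentation has a proper $(k+1)\times(k+1)$ square. You avoid induction entirely: you analyse, via the gap function $d_{i}=m(P_{i})-m(Q_{i})$, which presentations have no proper square of \emph{any} positive width, discard those, and then pigeonhole on the width $j\le k$ of a chosen proper square so that all pieces carry squares of a common size. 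Your route is in fact more robust than the paper's: the paper's assertion is false as stated, since the presentation $P=EENNEN$, $Q=NNEENE$ of $U_{2,4}\oplus U_{1,2}$ has square-width exactly $2$ and is not nested (its cyclic flats do not form a chain), yet it has no proper $2\times 2$ square, only a proper $1\times 1$ square. Your pigeonhole on a common width $j$ is precisely what legitimises the final application of Lemma \ref{imp}, which needs both pieces to carry squares of the same size; the paper has no counterpart to this step.

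Two points in your write-up need repair, though neither is fatal. First, your description of the rigid family is off: the gap analysis gives $d_{i}\in\{0,\,i,\,m+r-i\}$ for every $i$, which allows, besides $U_{j,2j}$ with at most one loop or coloop inserted in the middle, also presentations with $P=Q$ and presentations with $1\times 1$ squares at the two extreme ends separated by loops and coloops; in these cases the number of loops and coloops is unbounded, not bounded. The family is still well-quasi-ordered, but by the Higman-type argument the paper uses for $\mathcal{L}_{0}$ (ordered pairs of loop/coloop counts attached to finitely many possible decorations), not by any finiteness. Second, once you discard the rigid terms and pass to a subsequence, the relabelled sequence is no longer the minimal bad sequence, so ``contradicts the minimality of $M_{N}$'' is not literally available. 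The fix is to keep the original indexing: all parents of the pieces are original terms, so in the parent-index argument you should prepend the \emph{original} initial segment up to just below the smallest original parent index; your own badness check (that $M_{a}\le z_{i}$ forces $M_{a}\le M_{\phi(i)}$ with $a<\phi(i)$) then goes through verbatim and contradicts minimality of the original sequence at that position.
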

	
	\begin{proof}
		We prove this by induction on square-width of the class.
		When $ n = 0  $, the class consists of path presentations with $ P = Q $. Thus they can be represented by a combination of horizontal lines that go right or vertical lines that go up. Thus the corresponding matroids consist of only loops and co-loops. Suppose $ l $ counts the number of loops in the matroid and $ c $ counts the number of co-loops. Then each matroid in $ \mathcal{L}_{0}  $ can be expressed as  an ordered pair $ (l,c) $. If $ M $ corresponds to $ (l,c) $, and $ M^{'} $ corresponds to $ (l^{'},c^{'}) $, then $ M $ is a minor of $ M^{'} $ if and only if $ l \leq l^{'} $ and $ c \leq c^{'} $. But this is a word in the alphabet of integers and is well-quasi-ordered by Higman's Lemma.

		We assume that the result is true when $ n = k $, i.e., $ \mathcal{L}_{k}$ is well-quasi-ordered. We now prove that $ \mathcal{L}_{k+1} $ is well-quasi-ordered.
		
		Suppose the contrary that $ \mathcal{L}_{k+1} $ is not well-quasi-ordered. Then there exists a bad sequence in $ \mathcal{L}_{k+1} $. Clearly, every bad sequence has only finitely many path presentations that belong to $ \mathcal{L}_{k} $. Or else, if the bad sequence contains infinitely many path presentations from $ \mathcal{L}_{k} $, then this sub-sequence of matroids in $ \mathcal{L}_{k} $ is an infinite bad sequence in itself, which contradicts our inductive assumption. Thus we can safely remove this finite sub-sequence from the bad sequence without altering the property of being bad. As the class of nested matroids is well-quasi-ordered %cite Joe, 
		bad sequences can contain at most finitely many presentations that correspond to nested matroids. These matroids can also be removed from the corresponding bad sequences without much ado. Hence, from now on we only consider bad sequences that are made up entirely of path presentations of square width $ k+1 $, that are not nested. It is easy to prove that uniform matroids are well-quasi-ordered, and hence it would have sufficed to remove rectangular path presentations alone, as they correspond to uniform matroids, instead of nested ones.
		
		We now construct a minimal bad sequence as follows: Assume we have chosen $ L_{1},\ldots,L_{i-1} $ to be the initial segment of our minimal bad sequence. Then, in all the bad sequences that start with $ L_{1},\ldots,L_{i-1} $, consider the smallest path presentation in position $ i $ to be at the $ i^{th} $ position of the minimal bad sequence. Thus the first presentation in the minimal bad sequence is the smallest that can start such a sequence, $ L_{2} $ is the smallest  in position $ 2 $ in all bad sequences that start with $ (L_{1}) $, $ L_{3} $ is the smallest in position $ 3 $ in all bad sequences that start with $ (L_{1}, L_{2}) $ and so on. We denote this sequence by $ L_{1}, L_{2}, L_{3}, \ldots$, where $ L_{i} = [P_{i},Q_{i}] $. It can be seen easily that $ L_{1}, L_{2}, L_{3}, \ldots$ is a bad sequence in itself. If not, there exists $ i<j $ such that $ L_{i} \leq L_{j} $. By virtue of construction of the minimal bad sequence, $ L_{j} $ is the smallest presentation in the $ j^{th} $ position among all bad sequences that start with  $ L_{1},\ldots ,L_{j-1}$. Thus $ L_{i} $ is a minor of $ L_{j} $ in that particular sequence that $ L_{j} $ is chosen from, which contradicts the fact that it is a bad sequence. Since all $ L_{i}$s belong to $ \mathcal{L}_{k+1} $ and we do not have any nested matroids, they all have a proper $ k+1 $ square at say, $ j(i) $. 
		
		We apply the pulling apart operation as defined in Definition \ref{pull} to the sequence $ L_{1}, L_{2}, L_{3}, \ldots$ to obtain two new sequences $ B_{1},B_{2},B_{3},\ldots $, where $ B_{i} = [B_{j(i)}(P_{i}),B_{j(i)}(Q_{i})] $ and $ T_{1},T_{2},T_{3},\ldots $, where $T_{i} = [T_{j(i)}(P_{i}),T_{j(i)}(Q_{i})] $. Since the sequence $ L_{1}, L_{2}, L_{3}, \ldots$ is the minimal bad sequence, $ B_{1},B_{2},B_{3},\ldots $ cannot contain a bad subsequence. This can be seen as follows : let there exist a bad subsequence of $ B_{1},B_{2},B_{3},\ldots $, say $ B_{i_{1}},B_{i_{2}},B_{i_{3}},\ldots $, then $ L_{1}, L_{2},\ldots,L_{{i}_1-1},$ $B_{i_{1}},B_{i_{2}},\ldots $ is a bad sequence. If it were not a bad sequence, then there would exist $ L_{k} $ and $ B_{i_{j}} $ such that $ L_{k} \leq B_{i_{j}} $. But by Lemma \ref{bottom-minor}, $ L_{k}  \leq B_{i_{j}} \leq L_{i_{j}}$. Now, $ B_{i_{1}} $ is smaller than $ L_{i_{1}} $ which contradicts the fact that $ L_{1}, L_{2}, L_{3},\ldots$ is the minimal bad sequence.
		
		Thus in $ B_{1},B_{2},B_{3},\ldots $, there exists no bad subsequence. Hence by Lemma \ref{chain}, there exists a sub-sequence $ i_{1} < i_{2} < i_{3} < \ldots $ such that $ B_{i_{1}} \leq B_{i_{2}} \leq B_{i_{3}} \leq \ldots $. This implies that for some $ s < t $, $ T_{i_{s}} \leq T_{i_{t}} $, or else $ T_{1},T_{2},T_{3},\ldots $ would be a bad sequence (by the same reasoning as before). Thus $ B_{i_{s}} \leq B_{i_{t}} $ and $ T_{i_{s}} \leq T_{i_{t}} $. By Lemma \ref{imp}, $ GL(B_{i_{s}},T_{i_{s}}) = L_{i_{s}} $ is a minor of $ GL(B_{i_{t}},T_{i_{t}}) = L_{i_{t}} $. This is a contradiction to our assumption that $ L_{1},L_{2},L_{3},\ldots $ is a bad sequence. Thus $ L_{k+1} $ is well-quasi-ordered.
	\end{proof}


\begin{thebibliography}{99}

	\bibitem{GEELEN}
       James F. Geelen and A.M.H. Gerards and Geoff Whittle
	/textit{Branch-Width and Well-Quasi-Ordering in Matroids and Graphs},
	Journal of Combinatorial Theory, Series B, 84(2):270 - 290,2002.
	

	
	\bibitem{BONIN} 
	Joseph E. Bonin and Anna de Mier
	\textit{Lattice path matroids: Structural properties} 
	European Journal of Combinatorics,27(5): 701-738,2006.
	
       	\bibitem{Latt1} 
	Joseph E. Bonin and Anna de Mier and Marc Noy
	\textit{Lattice path matroids: enumerative aspects and \uppercase{T}utte polynomials} 
	\\\texttt{arXiv:math/0211188v1}, 2002 
           

         	
	\bibitem{book} 
	Oxley,James
	\textit{Matroid Theory} 
	Oxford University Press,2011



	
	\bibitem{PLMS:PLMS0326} 
	Higman, Graham
	\textit{Ordering by Divisibility in Abstract Algebras} 
	Proceedings of the London Mathematical Society, s3-2(1):326-336,1952



	\bibitem{MR0153601} 
	Nash-Williams, C. St. J. A.
	\textit{On well-quasi-ordering finite trees} 
	Proc. Cambridge Philos. Soc., 59:833-835,1963



	
	\bibitem{article} 
	Geelen, Jim and Gerards, Bert and Whittle, Geoff
	\textit{Solving \uppercase{R}ota's Conjecture} 
	Notices of the American Mathematical Society,61:736, 08 2014


  	\bibitem{Latt2} 
	Joseph E. Bonin and Anna de Mier 
	\textit{The Lattice of Cyclic Flats of a Matroid} 
	\\\texttt{arXiv:math/0505689 },2005


	\bibitem{konig} 
	K{\"o}nig, D{\'e}nes
	\textit{{\"U}ber eine Schlussweise aus dem Endlichen ins Unendliche} 
	Acta Sci. Math.(Szeged),3(2-3):121-130,1927

	\bibitem{Franchella1997} 
	Franchella, Miriam
	\textit{On the origins of \uppercase{D}{\'e}nes \uppercase{K}{\"o}nig's infinity lemma} 
	Archive for History of Exact Sciences,51(1):3-27,1997

\bibitem{citeulike:395714} 
	Diestel, Reinhard
	\textit{Graph Theory (Graduate Texts in Mathematics)} 
	Springer, August 2005



\end{thebibliography}
	\end{document}